\theoremstyle{plain}
\newcounter{thmcount}
\newtheorem{theorem}[thmcount]{Theorem}
\newtheorem{lemma}[thmcount]{Lemma}
\theoremstyle{definition}
\newtheorem{remark}[thmcount]{Remark}
\newtheorem*{remark*}{Remark}
\newtheorem{definition}[thmcount]{Definition}
\DeclareSymbolFont{cyrletters}{OT2}{wncyr}{m}{n}
\DeclareMathSymbol{\Sha}{\mathalpha}{cyrletters}{"58}
\title{A note on the growth of Sha in dihedral extensions}
\subjclass[2020]{Primary 11G05; Secondary 11G40.}
\author{Jamie Bell}
\address{University College London, Gower Street, London, WC1E 6BT, UK}
\email{james.bell.20@ucl.ac.uk}
\begin{document}

\begin{abstract}
    We provide a formula for the order of the Tate--Shafarevich group of elliptic curves over dihedral extensions of number fields of order $2n$, up to $4^{th}$ powers and primes dividing $n$. Specifically, for odd $n$ it is equal to the order of the Tate--Shafarevich group over the quadratic subextension. A similar formula holds for even $n$.
\end{abstract}

\maketitle

\section*{Introduction}
Computing the Tate--Shafarevich group of an elliptic curve is a difficult problem. It is conjectured to be finite, though this is not known for curves of analytic rank greater than 1. The main result about its size is due to Cassels, who proved that if it is finite, it must have square order \cite{Cassels}. This has some applications, for example to the parity conjecture.

In this paper, we show that under some circumstances we can determine $\Sha$ modulo $4^{th}$ powers in terms of $\Sha$ over smaller number fields. More precisely, we prove the following:

\begin{theorem}\label{maintheorem}
    Let $E$ be an elliptic curve over a number field $k$ and $F/k$ a dihedral extension of degree $2n$.  Let $p$ be a prime not dividing $n$ and assume $\Sha(E/F)[p^{\infty}]$ is finite \footnote{Note that $\Sha[p^{\infty}]$ is finite over subfields of $F$ by \cite{DD10} Remark 2.10.}. Then
    \begin{itemize}
        \item if $n$ is odd, and $K$ is the quadratic extension of $k$ contained in $F$, then there is an integer $t$ such that $$\frac{|\Sha(E/F)[p^{\infty}]|}{|\Sha(E/K)[p^{\infty}]|} = p^{4t}.$$ 
        \item if $n$ is even, and $K_1$, $K_2$ and $K_3$ are the three quadratic extensions of $k$ contained in $F$, then there is an integer $t$ such that
        $$\frac{|\Sha(E/F)[p^{\infty}]|}{|\Sha(E/K_1)[p^{\infty}]||\Sha(E/K_2)[p^{\infty}]||\Sha(E/K_3)[p^{\infty}]|} =p^{4t}.$$
    \end{itemize}
    
\end{theorem}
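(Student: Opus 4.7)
The plan is to work directly with the Galois module $M := \Sha(E/F)[p^\infty]$ under $G := \mathrm{Gal}(F/k) \cong D_n$, exploiting the $G$-equivariant non-degenerate alternating Cassels pairing on $M$ together with the representation theory of $D_n$. The first step is to reduce to a statement about $M$ alone: for every intermediate field $L$ we have $[F:L] \mid n$, so $p \nmid [F:L]$, and the Hochschild--Serre spectral sequence together with the compatibility of local conditions gives $\Sha(E/L)[p^\infty] \cong M^{\mathrm{Gal}(F/L)}$. The theorem thus reduces to showing that
\[
|M|\big/|M^{C_n}| \quad (\text{odd } n), \qquad |M|\big/\prod_{i=1}^{3} |M^{H_i}| \quad (\text{even } n),
\]
lies in $p^{4\mathbb{Z}}$, where the $H_i$ are the three index-two subgroups of $D_n$.

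When $p \nmid 2n$, $\mathbb{Z}_p[G]$ is semisimple and $M = \bigoplus_\rho M[\rho]$ decomposes over $\mathbb{Q}_p$-irreducibles. An elementary character calculation---using that the one-dimensional characters of $D_n$ are precisely those trivial on $C_n$ (resp.\ on each $H_i$), while every two-dimensional irreducible has zero $C_n$-invariants---shows that the odd-$n$ ratio equals $\prod_{\dim \rho = 2}|M[\rho]|$, and the even-$n$ ratio equals $\prod_{\dim\rho=2}|M[\rho]|\cdot|M[\mathbf{1}]|^{-2}$. Every irreducible representation of $D_n$ is orthogonal, so the alternating Cassels pairing restricted to an isotypic component $M[\rho]$ factors through an alternating form on the multiplicity space of $\rho$. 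This forces the $\mathbb{Z}_p$-length of $M[\rho]$ to be a multiple of $4$ whenever $\dim\rho = 2$ (even multiplicity, doubled by the representation dimension), so $|M[\rho]|$ is a fourth power of $p$; similarly $|M[\mathbf{1}]|$ is a square, so $|M[\mathbf{1}]|^{2}$ is a fourth power.

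The remaining case is $p = 2$ with $n$ odd (even $n$ forces $p$ odd). Here $\mathbb{Z}_p[G]$ is not semisimple, but one can still decompose $M$ under $C_n$ alone, whose order is coprime to $p$: $M = M^{C_n} \oplus \bigoplus_{\chi \neq \mathbf{1}} M[\chi]$, hence $|M|/|M^{C_n}| = \prod_{\chi \neq \mathbf{1}} |M[\chi]|$. A reflection $s \in D_n$ swaps $M[\chi]$ with $M[\chi^{-1}]$ (and $\chi \neq \chi^{-1}$ since $n$ is odd), and the twisted form $(x, y) \mapsto \langle x, s y\rangle$ on $M[\chi]$ is, by $G$-equivariance of $\langle\cdot,\cdot\rangle$ and $s^{2} = 1$, alternating and non-degenerate; therefore $|M[\chi]|$ is a square, and $|M[\chi]|\,|M[\chi^{-1}]| = |M[\chi]|^{2}$ is a fourth power. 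Summing over pairs $\{\chi,\chi^{-1}\}$ concludes this case.

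I expect the main obstacle to be the $p = 2$ case: one must construct the twisted alternating pairing on $M[\chi]$ by hand and verify its non-degeneracy, since the standard isotypic-decomposition machinery is unavailable. A secondary subtlety is that the orthogonal (rather than symplectic) type of the two-dimensional representations of $D_n$ is exactly what produces fourth powers, not merely squares, in the semisimple case; replacing orthogonal by symplectic would give only a square, consistent with Cassels' theorem but weaker than the claim.
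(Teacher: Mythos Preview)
Your approach---working entirely with the Galois module $M=\Sha(E/F)[p^\infty]$ and the Cassels pairing---is genuinely different from the paper's, which instead produces explicit Brauer relations in $D_{2n}$, checks they are $\mathbb{Z}_{(p)}$-relations for $p\nmid n$, deduces an isogeny of degree prime to $p$ between the corresponding products of Weil restrictions, and then reads off the fourth-power statement from Cassels' square-order theorem applied to the intermediate fields. For \emph{odd} $p$ your strategy can be completed and gives an attractive alternative proof: once one identifies the block $e_\tau\mathbb{Z}_p[G]\cong M_2(\mathcal{O}_E)$ and checks that the involution $g\mapsto g^{-1}$ is of orthogonal type (conjugate to transpose, trivial on the centre $\mathcal{O}_E$), the $G$-equivariant alternating pairing on $M[\tau]$ descends via Morita to a non-degenerate alternating $\mathbb{Z}_p$-pairing on the $\mathcal{O}_E$-module $N$ with $M[\tau]\cong N^{\oplus 2}$, forcing $|N|$ to be a square and $|M[\tau]|=|N|^2$ a fourth power. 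Your sketch (``alternating form on the multiplicity space'') is morally this, but the case $\mathrm{End}_G(\tau)\supsetneq\mathbb{Q}_p$ needs the details spelled out.

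For $p=2$, however, there is a genuine gap. From $G$-equivariance and $s^2=1$ one obtains only that the twisted form $B(x,y)=\langle x,sy\rangle$ is \emph{skew-symmetric}: $\langle x,sx\rangle=\langle sx,x\rangle=-\langle x,sx\rangle$ gives $2\langle x,sx\rangle=0$, which in $\mathbb{Q}_2/\mathbb{Z}_2$ does not force $\langle x,sx\rangle=0$. In fact the module-theoretic statement you reduce to is \emph{false} for abstract modules. Take $n=7$, let $\zeta\in\mathbb{F}_8^\times$ have order $7$, set $M=\mathbb{F}_8\oplus\mathbb{F}_8$ with $r\cdot(a,b)=(\zeta a,\zeta^{-1}b)$ and $s\cdot(a,b)=(b,a)$, and define $\langle(a,b),(a',b')\rangle=\mathrm{tr}_{\mathbb{F}_8/\mathbb{F}_2}(ab'+a'b)$. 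One checks directly that this is a $D_{14}$-module with a non-degenerate, alternating, $G$-equivariant pairing, yet $M^{C_7}=0$ and $|M|/|M^{C_7}|=2^6$ is not a fourth power. (Here the Frobenius orbit $\{\zeta,\zeta^2,\zeta^4\}$ does not contain $\zeta^{-1}=\zeta^6$, so the two $C_7$-isotypic pieces really are distinct, and $B((a,0),(a,0))=\mathrm{tr}(a^2)=\mathrm{tr}(a)$ is not identically zero, so $B$ is not alternating.) Thus at $p=2$ some arithmetic input beyond the $G$-module-with-pairing structure is required; this is exactly what the paper's isogeny argument supplies. The paper itself records, in its treatment of the Galois-module structure of $\Sha$, that the pairing route recovers the full statement only under the extra hypothesis that $p^a\equiv-1\pmod n$ is soluble.
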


\begin{remark}
    Although the conclusion for odd $n$ compares two Tate--Shafarevich groups in a cyclic extension $F/K$, we do need this to be contained in a dihedral extension to get the result -- it is not true for all cyclic extensions. For example, over the extension of $\mathbb{Q}$ given by adjoining a root of $x^3 - x^2 - 2x + 1$, which is cyclic of degree 3, consider the curve $y^2+y=x^3-x^2-10x-20$ (11a1). Using Magma (\cite{Magma}), we can calculate it has analytic order of $\Sha$ equal to 25, whereas it has trivial $\Sha$ over $\mathbb{Q}$. Therefore the $5^{\infty}$-part of $\Sha$ changes by a non-fourth-power.
\end{remark}
\begin{remark}
    The result also does not hold if $p|n$. For example, take the curve $y^2+xy+y=x^3-351233x-80149132$ (210b7) over the number field given by adjoining a root of $x^{6} - 30x^{4} + 225x^{2} - 200$, which is a $D_6$-extension of $\mathbb{Q}$. The analytic order of $\Sha$ is 4 over the quadratic extension and $2^{10}3^6$ over the $D_6$ extension (\cite{Magma}). This is equal to the order of $\Sha$ over the quadratic extension times $2^83^6$, so the $3^{\infty}$-part changes by a non-fourth-power.
\end{remark}

We can also say something about the Galois--module structure of $\Sha$.

\begin{theorem}\label{theorem2}
    Suppose $E, k, F, n$ and $p$ are as in Theorem \ref{maintheorem}, and $K$ is a quadratic subextension of $F/k$ with $H \cong C_n$ the Galois group of $F/K$. Assume that $\Sha(E/F)[p^{\infty}]$ is finite. Then as $\mathbb{Z}_p[H]$--modules, $\Sha(E/F)[p^{\infty}] \cong X \oplus X$ for some submodule $X$, and $\Sha(E/K)[p^{\infty}] \cong X^H \oplus X^H$.

    Moreover, when $n$ is odd and $p^a \equiv -1 \mod{n}$ has a solution, $|X|/|X^H|$ is a square.
\end{theorem}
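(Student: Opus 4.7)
The main tool is the Cassels--Tate pairing on $\Sha(E/F)[p^\infty]$: a non-degenerate alternating $\mathbb{Q}_p/\mathbb{Z}_p$-valued pairing, equivariant for the action of $\mathrm{Gal}(F/k) = D_n$. I plan to combine this pairing with the representation theory of $D_n$.

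For the decomposition statement, I would break $\Sha(E/F)[p^\infty]$ into $\mathbb{Z}_p[D_n]$-isotypic components $M_\rho$. Every irreducible $\overline{\mathbb{Q}_p}$-representation $\rho$ of $D_n$ is self-dual with Frobenius--Schur indicator $+1$, so its unique (up to scalar) invariant bilinear form is symmetric. On the $\rho$-isotypic piece the Cassels pairing is therefore the tensor of the symmetric invariant form on $\rho$ with a bilinear form on the multiplicity module; for this tensor to be alternating and non-degenerate, the multiplicity form itself must be alternating and non-degenerate. The structure theorem for non-degenerate alternating forms on finite abelian $p$-groups then splits the multiplicity module as $W'_\rho \oplus W'_\rho$, giving $M_\rho \cong X_\rho \oplus X_\rho$; summing yields $\Sha(E/F)[p^\infty] \cong X \oplus X$ as $\mathbb{Z}_p[D_n]$-modules, and \emph{a fortiori} as $\mathbb{Z}_p[H]$-modules. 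Since $p \nmid |H|$, Hochschild--Serre identifies $\Sha(E/F)[p^\infty]^H$ with $\Sha(E/K)[p^\infty]$, so $\Sha(E/K)[p^\infty] \cong X^H \oplus X^H$.

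For the ``$|X|/|X^H|$ is a square'' statement, I would further decompose $X = \bigoplus_O X_O$ into $\mathbb{Z}_p[H]$-isotypic components indexed by Galois orbits $O$ of $\overline{\mathbb{Q}_p}$-characters of $H$. Then $X^H = X_{\mathbf 1}$ and $|X|/|X^H| = \prod_{O \ne \mathbf 1} |X_O|$. The simple $\mathbb{Z}_p[H]$-summand attached to $O$ is a DVR with residue field of order $p^{|O|}$, so $|X_O|$ is a power of $p^{|O|}$. The hypothesis that $p^a \equiv -1 \pmod n$ has a solution forces every orbit to be stable under inversion; since $n$ is odd, no non-trivial character of $H$ is its own inverse, so inversion acts freely on each non-trivial $O$, forcing $|O|$ even. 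Hence $|X_O|$ is a perfect square for every $O \ne \mathbf 1$, and so is the product.

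The most delicate point will be the even-multiplicity step in the decomposition part when $\mathbb{Z}_p[D_n]$ fails to be semisimple, i.e.\ when $p = 2$ (which is permitted by $p \nmid n$ when $n$ is odd). In that case the $\mathbb{Z}_p[D_n]$-isotypic decomposition is not clean, and one should instead combine the (always valid) $\mathbb{Z}_p[H]$-isotypic decomposition with the action of the involution generating $\mathrm{Gal}(K/k)$, exploiting its compatibility with the Cassels pairing to produce the $X \oplus X$ structure by hand.
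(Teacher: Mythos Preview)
Your outline is sound in spirit, and the second half (the square statement via orbit sizes) is essentially identical to the paper's argument: both reduce to showing that $\mathrm{ord}_d(p)$ is even for every $d\mid n$ with $d>1$, which is exactly your ``inversion acts freely on each non-trivial Galois orbit'' observation. The identification $\Sha(E/F)[p^\infty]^H\cong\Sha(E/K)[p^\infty]$ is also handled the same way.

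The decomposition step, however, is where you diverge from the paper, and your version carries real technical weight that you have not discharged. You invoke a tensor factorisation of the Cassels--Tate pairing on the $\rho$-isotypic piece as (symmetric invariant form on $\rho$) $\otimes$ (form on the multiplicity space), and then apply the alternating-form structure theorem to the multiplicity space. That argument is clean over an algebraically closed field, but $\Sha(E/F)[p^\infty]$ is a torsion $\mathbb{Z}_p$-module, not a $\overline{\mathbb{Q}_p}$-vector space: the central idempotents of $\mathbb{Z}_p[D_{2n}]$ cut out pieces indexed by $\mathbb{Q}_p$-irreducibles (whose endomorphism rings are unramified extensions $\mathcal{O}_\rho$ of $\mathbb{Z}_p$, not $\mathbb{Z}_p$ itself), the invariant form on such a piece is only unique up to an $\mathcal{O}_\rho^\times$-scalar, and the pairing takes values in $\mathbb{Q}_p/\mathbb{Z}_p$ rather than $K_\rho/\mathcal{O}_\rho$. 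None of this is fatal, but the Morita and trace bookkeeping needed to make the tensor factorisation precise is exactly the kind of thing the paper bypasses. The paper instead observes that the \emph{twisted} pairing $\langle x,y\rangle:=(x,sy)$, with $s$ a reflection in $D_{2n}$, is $\mathbb{Z}_p[H]$-\emph{bilinear} (because $sh=h^{-1}s$ turns the $H$-invariance $(hx,hy)=(x,y)$ into $\langle hx,y\rangle=\langle x,hy\rangle$), non-degenerate, and skew, and then quotes Chetty's Lemma~2.8 to get $X\oplus X$ directly over the commutative ring $\mathbb{Z}_p[H]$. This one twist handles all $p\nmid n$ uniformly---in particular $p=2$---and never touches the $D_{2n}$-block decomposition or Frobenius--Schur indicators. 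Notably, your proposed $p=2$ workaround (``combine the $\mathbb{Z}_p[H]$-isotypic decomposition with the action of the involution'') is precisely this trick; the paper simply uses it as the main argument rather than as a patch.
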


Note that the second part of Theorem \ref{theorem2} reproves Theorem \ref{maintheorem} in this case.

\subsection*{Notation}
Let $W^K_k(E)$ be the Weil restriction of $E/K$ to $k$.

Let $C_n$ and $D_{2n}$ denote the cyclic group of order $n$ and the dihedral group of order $2n$.

Let $[n]$ denote the multiplication by $n$ isogeny on an elliptic curve, or the multiplication by $n$ map on an abelian group. Let $G[n]$ be the kernel of $[n]$ on $G$, and $G[n^{\infty}]$ the union of $G[n^k]$ for positive integers $k$.

Let $\Sha(A/k)$ denote the Tate--Shafarevich group of an abelian variety $A/k$.

Let $\mathbb{Z}_{(p)}$ be the localisation of $\mathbb{Z}$ at $p$.

Let $F/k$ be a dihedral extension of number fields of degree $2n$, with quadratic subextensions $K$ (for odd $n$) or $K_1, K_2$ and $K_3$ (for even $n$), as in the statement of Theorem $\ref{maintheorem}$.

\subsection*{Acknowledgements}
I would like to thank my supervisor Vladimir Dokchitser for guiding my research towards this observation, and his essential help in sorting out the details of the proof.

This work was supported by the Engineering and Physical Sciences Research Council [EP/L015234/1], the EPSRC Centre for Doctoral Training in Geometry and Number Theory (The London School of Geometry and Number Theory) at University College London.

\section*{The size of $\Sha$}
\begin{definition}
    Recall that a Brauer relation in a group $G$ is a formal sum of subgroups $\sum_i H_i - \sum_j H'_j$ satisfying $\bigoplus_i \mathbb{Q}[G/H_i] \cong \bigoplus_j \mathbb{Q}[G/H_j']$ as $\mathbb{Q}[G]$-modules. If we can replace $\mathbb{Q}$ by $\mathbb{Z}_{(p)}$, call it a $\mathbb{Z}_{(p)}$-relation.
\end{definition}

Brauer relations induce isogenies between products of Weil restrictions of elliptic curves (\cite{DD10} Theorem 2.3). By considering the degree of the induced isogeny, we will show that the Tate--Shafarevich groups of two abelian varieties differ only in their $p$-parts for $p|n$, and relate these back by $4^{th}$ powers to the terms we want.

\begin{lemma}\label{BrauerRel}
    In $D_{2n}$, there are Brauer relations
    \begin{itemize}
        \item $1 + 2D_{2n} -2C_2 -C_n$ when $n$ is odd, and
        \item $1 + 2D_{2n} -2C_2 -C_n + D_n - D_n'$ when $n$ is even.
    \end{itemize}
    Where, if $D_{2n} = \langle r, s | r^n = s^2 = 1, srs = r^{-1} \rangle $, we let $C_2 = \langle s \rangle$, $C_n = \langle r \rangle$, $D_n = \langle r^2, s \rangle$ and $D_n' = \langle r^2, sr \rangle$.
\end{lemma}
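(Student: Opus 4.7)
The plan is to verify both relations via the standard criterion: by Artin's theorem on induced characters, a formal combination $\sum_i n_i H_i$ of subgroups of a finite group $G$ is a Brauer relation (over $\mathbb{Q}$) if and only if for every cyclic subgroup $C \le G$ one has
\[
\sum_i n_i \,\#(C \backslash G / H_i) \;=\; 0.
\]
This is because $\dim_\mathbb{Q} (\mathbb{Q}[G/H])^{C} = \#(C \backslash G/H)$, and by Artin's induction theorem equality of virtual $\mathbb{Q}[G]$-characters can be checked after restriction to all cyclic subgroups. So the task reduces to bookkeeping of double cosets.

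First I would enumerate the conjugacy classes of cyclic subgroups of $D_{2n}$. Every rotation subgroup is of the form $\langle r^m\rangle$ for a unique divisor $m \mid n$ and is self-normalising up to conjugacy in the obvious way. The reflection subgroups split into a single conjugacy class, represented by $\langle s \rangle$, when $n$ is odd, and into two conjugacy classes, represented by $\langle s\rangle$ and $\langle sr\rangle$, when $n$ is even.

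Next, for each such cyclic $C$ and each $H \in \{1,D_{2n},C_2,C_n\}$ (and additionally $H \in \{D_n,D_n'\}$ in the even case), I would compute $\#(C \backslash G/H)$ by writing out the coset space $G/H$ concretely and reading off the orbits of left multiplication by $C$. In every case the computation reduces to elementary counting of fixed points of shifts or involutions acting on $\mathbb{Z}/n$. For instance when $C = \langle r^m\rangle$ with $m \mid n$, left multiplication by $r^m$ on $G/C_2 \cong \mathbb{Z}/n$ is the translation $i \mapsto i+m$, which is free with $m$ orbits; and on $G/C_n$ of size $2$ both cosets are fixed. When $C = \langle s\rangle$ with $n$ even, the involution $i \mapsto -i$ on $G/C_2 \cong \mathbb{Z}/n$ has the two fixed points $0, n/2$, giving $(n+2)/2$ orbits; on $G/D_n$ both cosets are fixed while on $G/D_n'$ they are swapped, giving $2$ and $1$ orbits respectively. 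The case $C = \langle sr\rangle$ is the analogous computation with $D_n$ and $D_n'$ interchanged.

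Finally I would substitute these counts into the proposed linear combinations. For the odd case, the identity $\#(C\backslash G/1) + 2 - 2\#(C\backslash G/C_2) - \#(C\backslash G/C_n) = 0$ holds on rotation subgroups because the counts are $(2m, 1, m, 2)$, and on a reflection subgroup because the counts are $(n, 1, (n+1)/2, 1)$. For the even case the rotation subgroups contribute the same vanishing combination as before (the extra term $D_n - D_n'$ contributes $0$, since $r^m$ acts identically on $G/D_n$ and on $G/D_n'$), while on $\langle s\rangle$ and $\langle sr\rangle$ the $D_n - D_n'$ term contributes exactly the discrepancy $\pm 1$ needed to absorb the failure of the odd-case relation at reflections. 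I do not expect any substantive obstacle; the only point that requires care is keeping track of which conjugacy class of reflections belongs to $D_n$ and which to $D_n'$, which is exactly what makes the even-case correction term $D_n - D_n'$ the right one.
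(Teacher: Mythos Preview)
Your argument is correct. The criterion via Artin induction---that a virtual permutation character vanishes over $\mathbb{Q}$ if and only if its $C$-fixed dimensions vanish for every cyclic $C$---is valid, and your double-coset counts all check out (including the $\pm 1$ bookkeeping at the two reflection classes in the even case).

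The paper takes a different, more representation-theoretic route: it writes down the irreducible complex characters of $D_{2n}$, uses Frobenius reciprocity to decompose each $\mathrm{Ind}^{D_{2n}}_H\mathbf{1}$ into irreducibles, reads off the claimed equality over $\mathbb{C}$, and then invokes the fact that permutation modules are defined over $\mathbb{Q}$ (so isomorphism over $\mathbb{C}$ implies isomorphism over $\mathbb{Q}$). Your approach trades the need to know the character table of $D_{2n}$ and the $\mathbb{C}$-to-$\mathbb{Q}$ descent step for a uniform combinatorial check on orbits; the paper's approach, in exchange, makes transparent exactly which irreducibles occur in each permutation module, which is sometimes useful downstream. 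Both are standard and of comparable length here.
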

\begin{proof}
    Note that $\mathbb{C}[G/H] \cong \mathrm{Ind}^G_H \mathbf{1}$. For $H = 1$, we get the regular representation, and for $H=G$ we get the trivial representation.

    In the odd case, the irreducible representations are the trivial representation $\mathbf{1}$, the sign representation $\epsilon$, and $\frac{n-1}{2}$ two-dimensional representations.

    We compute $\mathrm{Ind}^{D_{2n}}_{C_2} \mathbf{1}$ using Frobenius reciprocity. For each irreducible representation $\phi$ of $D_{2n}$, we have $\langle \mathrm{Ind}^{D_{2n}}_{C_2} \mathbf{1}, \phi \rangle = \langle \mathbf{1}, \mathrm{Res}^{D_{2n}}_{C_2} \phi \rangle$. We find that the trivial representation appears in $\mathrm{Ind}^{D_{2n}}_{C_2} \mathbf{1}$ once, the sign representation does not appear, and each two-dimensional representation appears once. Similarly we find that $\mathrm{Ind}^{D_{2n}}_{C_2} \mathbf{1} = \mathbf{1} \oplus \epsilon$. We conclude that $\mathbb{C}[D_{2n}/1] \oplus \mathbb{C}[D_{2n}/D_{2n}]^{\oplus 2} \cong \mathbb{C}[D_{2n}/C_n] \oplus \mathbb{C}[D_{2n}/C_2]^{\oplus 2}$.

    Now as these representations are realisable over $\mathbb{Q}$ and isomorphic over $\mathbb{C}$, they are isomorphic over $\mathbb{Q}$ (\cite{Serre} Ch. 12, Prop 33 and discussion). Therefore we have the desired Brauer relation.

    The proof for the even case proceeds similarly. We now have three non-trivial one-dimensional representations, $\epsilon_1$, $\epsilon_2$ and $\epsilon_3$, with kernels $C_n$, $D_n$ and $D_n'$ respectively.

    We find the induced representation from $C_2$ has irreducible summands $\mathbf{1}$, $\epsilon_2$ and all the two-dimensional representations. Inducing from each group of order $n$, we get $\mathbf{1} \oplus \epsilon_i$ for the $\epsilon_i$ corresponding to the subgroup. Putting this together and proceeding as before, we get the desired Brauer relation.
\end{proof}

\begin{lemma}\label{isogeny}
    For the Brauer relations $\sum_i H_i - \sum_j H_j'$ given in Lemma \ref{BrauerRel}, there are isogenies between $A = \prod_i W^{F^{H_i}}_k (E)$ and $B = \prod_j W^{F^{H_j'}}_k (E)$ of degree coprime to $p$, where $p$ is any prime not dividing $n$.
\end{lemma}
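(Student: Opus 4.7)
The plan is to invoke Theorem 2.3 of \cite{DD10}, which attaches an isogeny of products of Weil restrictions of $E$ to any $\mathbb{Q}$-Brauer relation, together with the refinement that when the relation is actually a $\mathbb{Z}_{(p)}$-relation (i.e.\ the underlying permutation module isomorphism can be realised over $\mathbb{Z}_{(p)}[G]$), the resulting isogeny has degree coprime to $p$. The task therefore reduces to showing that the Brauer relations produced in Lemma \ref{BrauerRel} are $\mathbb{Z}_{(p)}$-relations whenever $p \nmid n$.

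Whenever $p \nmid 2n = |D_{2n}|$, the group ring $\mathbb{Z}_{(p)}[D_{2n}]$ is a maximal $\mathbb{Z}_{(p)}$-order (the standard consequence of Maschke), so any two $\mathbb{Z}_{(p)}[D_{2n}]$-lattices in the same rational representation are isomorphic, and the $\mathbb{Q}[D_{2n}]$-isomorphism of Lemma \ref{BrauerRel} automatically descends. This covers every case except $n$ odd with $p = 2$, because when $n$ is even the hypothesis $p \nmid n$ forces $p$ to be odd.

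In the remaining case $n$ odd and $p = 2$, the integer $n$ is a unit in $\mathbb{Z}_{(2)}$, so the central idempotent $e = \tfrac{1}{n}\sum_{g \in C_n} g$ lies in $\mathbb{Z}_{(2)}[D_{2n}]$ and splits every permutation module $\mathbb{Z}_{(2)}[G/H]$ as $eM \oplus (1-e)M$. On the $e$-component, $C_n$ acts trivially, so each module becomes a $\mathbb{Z}_{(2)}[G/C_n] = \mathbb{Z}_{(2)}[C_2]$-module and the relation collapses to an identity that is immediate by inspection. On the $(1-e)$-component, the ring $(1-e)\mathbb{Z}_{(2)}[D_{2n}]$ is a maximal $\mathbb{Z}_{(2)}$-order in $(1-e)\mathbb{Q}[D_{2n}]$, because $2$ is unramified in every cyclotomic field $\mathbb{Q}(\zeta_d)$ for $d \mid n$ and the crossed product by the complex-conjugation action of $\langle s \rangle$ preserves maximality; once again uniqueness of lattices over a maximal order lifts the $\mathbb{Q}$-isomorphism. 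The main technical obstacle is precisely this maximality claim for the $(1-e)$-part; if preferred, it can be side-stepped by invoking Conlon--Dress induction to reduce the $\mathbb{Z}_{(2)}$-check to computations on the $2$-hyperelementary subgroups of $D_{2n}$, namely $C_d$ and $D_{2d}$ with $d \mid n$, where vanishing is verified by directly enumerating fixed points.
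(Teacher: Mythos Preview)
Your approach is correct and runs parallel to the paper's: both reduce the lemma to showing that the relations of Lemma~\ref{BrauerRel} are $\mathbb{Z}_{(p)}$-relations for every $p\nmid n$, and then feed this into the construction of \cite{DD10} (via \cite{Milne}) to obtain an isogeny whose degree is prime to $p$. The difference is that the paper disposes of the $\mathbb{Z}_{(p)}$-relation claim by citing the proof of Lemma~3.9 in \cite{Bartel1}, whereas you prove it from scratch. Your reduction is clean: for $p\nmid 2n$ the maximal-order argument is standard, and when $n$ is even the hypothesis $p\nmid n$ already forces $p$ odd, so only the case $n$ odd, $p=2$ needs separate treatment. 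Splitting by the central idempotent $e=\tfrac{1}{n}\sum_{g\in C_n}g$ is exactly the right move there, and the $e$-component does collapse to an obvious identity of $\mathbb{Z}_{(2)}[C_2]$-modules.

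The one step you should tighten is the maximality of $(1-e)\mathbb{Z}_{(2)}[D_{2n}]$. The phrase ``the crossed product by complex conjugation preserves maximality'' is not a theorem one can cite; it is the actual content of the claim. Concretely, for each $d\mid n$ with $d>1$ one must check that the twisted ring $\mathbb{Z}_{(2)}[\zeta_d]\{s\}$ (with $s^2=1$, $s\zeta_d s=\zeta_d^{-1}$) maps isomorphically onto $M_2(\mathbb{Z}_{(2)}[\zeta_d]^{+})$ under the natural action on $\mathbb{Z}_{(2)}[\zeta_d]$. A direct computation with the basis $1,\zeta_d,s,\zeta_d s$ shows the index is $(\zeta_d-\zeta_d^{-1})^2$, the discriminant of $\mathbb{Z}_{(2)}[\zeta_d]$ over $\mathbb{Z}_{(2)}[\zeta_d]^{+}$, and this is a $2$-adic unit because $d$ is odd. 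With that line added, your argument is complete; the Conlon--Dress alternative you sketch would work too but needs its own bookkeeping. The paper's route via \cite{Bartel1} is shorter precisely because Bartel has already packaged this verification, while your version has the merit of being self-contained.
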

\begin{proof}
     By (\cite{Bartel1}, proof of Lemma 3.9), this is a $\mathbb{Z}_{(p)}$-relation. By (\cite{Bartel1}, discussion at the start of Section 3), this is equivalent to saying there is an injection of $\mathbb{Z}D_{2n}$-lattices, $\mathbb{Z}[S_1] \rightarrow \mathbb{Z}[S_2]$ with finite cokernel of order $d$, with $p \nmid d$. Here $S_1 = \bigsqcup_i D_{2n}/H_i$ and $S_2 = \bigsqcup_j D_{2n}/H_j'$. Finally this induces an isogeny of degree $d^2$ as required (stated explicitly in \cite{Bartel2} \S4, based on \cite{DD10} \S4.2 and \cite{Milne} Proposition 6).
\end{proof}
\begin{lemma}\label{Sha}
    Let $A$, $B$ and $p$ be as in Lemma \ref{isogeny}. Then $|\Sha(A/k)[p^{\infty}]| = |\Sha(B/k)[p^{\infty}]|$.
\end{lemma}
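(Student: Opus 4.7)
The plan is to exploit the standard fact that an isogeny of degree coprime to $p$ induces an isomorphism on the $p$-primary part of $\Sha$. Concretely, given the isogeny $\phi\colon A \to B$ of degree $d$ produced by Lemma \ref{isogeny}, I would invoke the complementary isogeny $\psi\colon B \to A$ satisfying $\psi\circ\phi = [d]_A$ and $\phi\circ\psi = [d]_B$. Applying the functoriality of $\Sha(-/k)$ produces maps $\phi_\ast\colon \Sha(A/k) \to \Sha(B/k)$ and $\psi_\ast\colon \Sha(B/k) \to \Sha(A/k)$ whose compositions are multiplication by $d$.

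Next, I would restrict to $p$-primary parts. Because $p\nmid d$, the map $[d]$ is an automorphism of any $p$-primary torsion abelian group, so both compositions $\phi_\ast\psi_\ast$ and $\psi_\ast\phi_\ast$ are automorphisms on $\Sha[p^{\infty}]$. This forces $\phi_\ast\colon \Sha(A/k)[p^\infty] \to \Sha(B/k)[p^\infty]$ to be an isomorphism, and in particular the two groups have the same cardinality.

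To make sense of these cardinalities I would separately verify finiteness: the hypothesis of Theorem \ref{maintheorem} supplies finiteness of $\Sha(E/F)[p^\infty]$, which by the footnote citing \cite{DD10} Remark 2.10 extends to every intermediate field $L$ of $F/k$. Since $A$ and $B$ are products of Weil restrictions $W^{F^{H}}_k(E)$, and the standard identification $\Sha(W^L_k(E)/k) \cong \Sha(E/L)$ holds (as used implicitly throughout the paper), the $p$-primary parts of $\Sha(A/k)$ and $\Sha(B/k)$ are finite products of finite groups, hence finite.

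I do not anticipate a real obstacle here; the argument is essentially routine once Lemma \ref{isogeny} is in hand. The only point requiring a moment of care is fixing conventions for the complementary isogeny so that both compositions genuinely equal $[d]$ (and not $[d]$ composed with some automorphism), but this is standard for isogenies of abelian varieties.
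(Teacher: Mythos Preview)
Your proposal is correct and follows essentially the same route as the paper: the paper also takes the isogeny $\phi$ from Lemma \ref{isogeny}, produces companion isogenies $\phi',\phi''$ with $\phi'\circ\phi=\phi\circ\phi''=[d^2]$, and concludes that $\phi$ induces an isomorphism on $\Sha[p^\infty]$ because $[d^2]$ does. Your extra paragraph on finiteness is not needed for the lemma as stated (the isomorphism already gives equality of cardinals), though it is of course relevant downstream.
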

\begin{proof}
    As we have an isogeny $\phi : A \rightarrow B$ of degree $d^2$, it has conjugates $\phi'$ and $\phi''$ satisfying $\phi' \circ \phi = \phi \circ \phi'' = [d^2]$. The isogeny $\phi$ induces a homomorphism $\Sha(A/k)[p^{\infty}] \rightarrow \Sha(B/k)[p^{\infty}]$, which is an isomorphism because $[d^2]$ is.
\end{proof}
\begin{proof}[Proof of Theorem \ref{maintheorem}]
    In the case where $n$ is odd, Lemma \ref{Sha} tells us that $$|\Sha(E/F)[p^{\infty}]||\Sha(E/k)[p^{\infty}]|^2 = |\Sha(E/K)[p^{\infty}]||\Sha(E/F^{C_2})[p^{\infty}]|^2.$$ When $\Sha$ of an elliptic curve is finite, it has square order, so $|\Sha(E/F)[p^{\infty}]| \equiv |\Sha(E/K)[p^{\infty}]| \pmod{\mathbb{Q}^{*4}}$.

    In the case where $n$ is even, the same argument tells us $$|\Sha(E/F)[p^{\infty}]||\Sha(E/F^{D_{2n}})[p^{\infty}]| \equiv |\Sha(E/F^{D_{2n'}})[p^{\infty}]||\Sha(E/F^{C_n})[p^{\infty}]| \pmod{\mathbb{Q}^{*4}}.$$ These fixed fields are the three quadratic subextensions of $F/k$, so we can write this as $$|\Sha(E/F)[p^{\infty}]| \equiv |\Sha(E/K_1)[p^{\infty}]||\Sha(E/K_2)[p^{\infty}]||\Sha(E/K_3)[p^{\infty}]| \pmod{\mathbb{Q}^{*4}},$$ again using the squareness of the order of $\Sha$.
\end{proof}

\section*{Galois Module Structure}
In some cases, the same result follows from the work of Chetty, which also gives information about the Galois module structure of $\Sha$. Let $n, p$ and $H \cong C_n$ be as in Theorem \ref{theorem2}, and assume that $\Sha(E/F)[p^{\infty}]$ is finite.

\begin{lemma}\label{localrings}
    $\mathbb{Z}_p[C_n]$ is a direct sum of local rings with principal maximal ideals.
\end{lemma}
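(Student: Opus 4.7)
The plan is to identify $\mathbb{Z}_p[C_n]$ with $\mathbb{Z}_p[x]/(x^n-1)$ via the standard isomorphism sending a generator of $C_n$ to $x$, and then factor the polynomial $x^n-1$ using Hensel's lemma. Since $p \nmid n$, the derivative $nx^{n-1}$ is a unit times $x^{n-1}$, so $x^n-1$ is separable modulo $p$; hence its factorization over $\mathbb{F}_p$ into distinct monic irreducible polynomials $\bar f_1(x), \ldots, \bar f_r(x)$ lifts uniquely to a factorization $x^n-1 = f_1(x)\cdots f_r(x)$ into pairwise coprime monic polynomials over $\mathbb{Z}_p$ with $f_i$ reducing to $\bar f_i$.

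By the Chinese Remainder Theorem I then get
\[
\mathbb{Z}_p[C_n] \;\cong\; \mathbb{Z}_p[x]/(x^n-1) \;\cong\; \prod_{i=1}^r \mathbb{Z}_p[x]/(f_i(x)),
\]
so it suffices to show each factor $R_i := \mathbb{Z}_p[x]/(f_i(x))$ is local with principal maximal ideal. For this I would note that $R_i$ is a free $\mathbb{Z}_p$-module of finite rank, hence $p$-adically complete, and that $R_i/pR_i \cong \mathbb{F}_p[x]/(\bar f_i(x))$ is a field because $\bar f_i$ is irreducible. Therefore $pR_i$ is the unique maximal ideal of $R_i$ (any maximal ideal contains $p$ by completeness, and the quotient by $p$ is already a field), and this maximal ideal is obviously principal, generated by $p$.

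There is no real obstacle here beyond invoking Hensel's lemma correctly; the key input is the hypothesis $p \nmid n$, which guarantees separability of $x^n-1$ modulo $p$ and hence the lift of the factorization. Equivalently, one could cite the standard fact that $\mathbb{Z}_p[\zeta_n]$ is unramified over $\mathbb{Z}_p$ when $p \nmid n$ and is therefore a DVR, and observe that $\mathbb{Z}_p[x]/(f_i(x))$ is precisely the ring of integers in the unramified extension cut out by $\bar f_i$.
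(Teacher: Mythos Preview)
Your proof is correct and follows essentially the same route as the paper: identify $\mathbb{Z}_p[C_n]$ with $\mathbb{Z}_p[T]/(T^n-1)$, factor into monic irreducibles (using $p\nmid n$ for separability), apply CRT, and verify each factor is a DVR. The only difference is that the paper first splits $T^n-1$ into cyclotomic pieces $\Phi_d$ before factoring each into irreducibles $P_{d,i}$ of degree $\mathrm{ord}_d(p)$---a refinement it exploits in the proof of the next theorem---and cites Serre's \emph{Local Fields} for the DVR structure rather than identifying $p$ as the uniformiser directly as you do.
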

\begin{proof}
     $\mathbb{Z}_p[C_n]$ is isomorphic to $\mathbb{Z}_p[T]/(T^n-1)$. This is a direct sum of rings $\mathbb{Z}_p[T]/\Phi_d(T)$ for cyclotomic polynomials $\Phi_d$ with $d|n$. These further split into direct sums because $\Phi_d = \prod P_{d,i}$, a product of irreducible polynomials over $\mathbb{Z}_p$ of degree $\mathrm{ord}_d(p)$. Finally $\mathbb{Z}_p[T]/P_{d,i}(T)$ is the ring of integers of $\mathbb{Q}_p[T]/P_{d,i}(T)$ (\cite{Serre2}, Ch. IV, \S 4, Prop. 16) so has principal maximal ideal.
\end{proof}
\begin{theorem}[ = Theorem \ref{theorem2}]\label{directsum}
    $\Sha(E/F)[p^{\infty}] \cong X \oplus X$ as $\mathbb{Z}_p[H]$-modules, for some submodule $X$, and $\Sha(E/K)[p^{\infty}] \cong X^H \oplus X^H$.

    Moreover, when $n$ is odd and $p^a \equiv -1 \mod{n}$ has a solution, $|X|/|X^H|$ is a square.
\end{theorem}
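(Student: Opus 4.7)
Write $G=D_{2n}=\mathrm{Gal}(F/k)$, $M:=\Sha(E/F)[p^{\infty}]$, and pick a reflection $s\in G$, so that $G=H\langle s\rangle$ and $shs=h^{-1}$ for $h\in H$. My plan is to combine Lemma~\ref{localrings} with the Cassels--Tate pairing on $M$ and the $s$-action. Since $p\nmid|H|$, Lemma~\ref{localrings} gives $\mathbb{Z}_p[H]=\prod_\alpha R_\alpha$, and correspondingly $M=\bigoplus_\alpha M_\alpha$ with each $M_\alpha$ a finite module over the DVR $R_\alpha$. The Cassels--Tate pairing $\langle\cdot,\cdot\rangle\colon M\times M\to\mathbb{Q}_p/\mathbb{Z}_p$ is non-degenerate, alternating, and $G$-equivariant, hence satisfies $\langle hx,y\rangle=\langle x,h^{-1}y\rangle$ for $h\in H$. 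Letting $\iota$ denote the involution of $\mathbb{Z}_p[H]$ given by $h\mapsto h^{-1}$, this makes the pairing perfect between $M_\alpha$ and $M_{\iota(\alpha)}$ and zero on all other pairs of blocks. The element $s$ acts $\mathbb{Z}_p$-linearly on $M$ and sends $M_\alpha$ to $M_{\iota(\alpha)}$.

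Next I would twist the pairing by $s$: set $B(x,y):=\langle x,sy\rangle$, a non-degenerate $\mathbb{Z}_p$-bilinear form on each $M_\alpha$ satisfying $B(hx,y)=B(x,hy)$ and $B(y,x)=-B(x,y)$. By the classical structure theorem for a finite module over a DVR equipped with a non-degenerate alternating bilinear form, each $M_\alpha$ is a direct sum of hyperbolic planes over $R_\alpha$, hence isomorphic to $X_\alpha\oplus X_\alpha$ for some $R_\alpha$-submodule $X_\alpha$. Summing gives $M\cong X\oplus X$ as $\mathbb{Z}_p[H]$-modules with $X:=\bigoplus_\alpha X_\alpha$. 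Since $p\nmid|H|$, the restriction map $\Sha(E/K)[p^\infty]\to M^H$ is an isomorphism (the Galois cohomology obstructions vanish in the $p$-part), so $\Sha(E/K)[p^\infty]\cong X^H\oplus X^H$.

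For the moreover clause, the hypothesis $p^a\equiv-1\pmod n$ forces $\mathrm{ord}_d(p)$ to be even for every divisor $d>1$ of $n$: since $n$ is odd we have $d>2$, so $\mathrm{ord}_d(p)$ divides $2a$ but not $a$ and is therefore even. Consequently each $R_\alpha$ with $d_\alpha>1$ has even $\mathbb{Z}_p$-rank $f_\alpha=\mathrm{ord}_{d_\alpha}(p)$, so $|X_\alpha|$, being a power of $p^{f_\alpha}$, is a square. Moreover $H$ acts on $X_\alpha$ through a primitive $d_\alpha$-th root of unity, and for $d_\alpha>1$ the element $\zeta_{d_\alpha}-1$ is a unit in $R_\alpha$ (as $p\nmid d_\alpha$), so $X_\alpha^H=0$. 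Thus $X^H$ is concentrated in the trivial-character block $d_\alpha=1$, and $|X|/|X^H|=\prod_{d_\alpha>1}|X_\alpha|$ is a product of squares. The main obstacle I anticipate is verifying the alternating property of $B$ when $p=2$, where alternating is strictly stronger than antisymmetric: $B(x,x)=\langle x,sx\rangle$ satisfies $2B(x,x)=0$ only a priori. When $\iota(\alpha)=\alpha$ one can work with the original sesquilinear pairing on $M_\alpha$; in the swapped case $\iota(\alpha)\neq\alpha$ (which can occur for odd $n$ and $p=2$, e.g.\ $n=7$) one must either refine to a quadratic form using the genuine alternating nature of Cassels' pairing, or invoke Morita equivalence for the blocks of $\mathbb{Z}_p[G]$ that become $2\times 2$ matrix rings, carefully tracking the $\mathbb{Z}_p[H]$-structure through the equivalence.
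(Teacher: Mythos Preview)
Your approach is essentially the same as the paper's: both twist the Cassels--Tate pairing by $s$ to obtain a non-degenerate $H$-balanced form $B(x,y)=\langle x,sy\rangle$, deduce $\Sha(E/F)[p^\infty]\cong X\oplus X$ from it, identify $\Sha(E/K)[p^\infty]$ with the $H$-invariants via $p\nmid n$, and prove the ``moreover'' clause by the identical even-degree argument on the local factors $\mathbb{Z}_p[T]/P_{d,i}(T)$. The only difference is packaging---the paper outsources your hyperbolic-decomposition step (and the $p=2$ subtlety you correctly flag) to \cite{Chetty}, Lemma~2.8, rather than arguing it directly.
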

\begin{proof}
    Apply Lemma 2.8 from \cite{Chetty} to the pairing $(x,sy)$, valued in $\mathbb{Q}_p/\mathbb{Z}_p$, where $s$ is a lift of the non-trivial element of $\mathrm{Gal}(K/k)$ to $\mathrm{Gal}(F/k)$, and $(\_,\_)$ is the Cassels--Tate pairing on $\Sha(E/K)$. This satisfies the required properties because the Cassels--Tate pairing is Galois equivariant, so $\Sha(E/F)[p^{\infty}] \cong X \oplus X$.

    Note that $\Sha(E/F)[p^{\infty}]^H \cong \Sha(E/K)[p^{\infty}]$ as $p \nmid n$ (e.g. \cite{Park}, Lemma 11).

    Now by Lemma \ref{localrings}, $\Sha(E/F)[p^{\infty}]$ is a sum of $\mathbb{Z}_p(T)/P_{d,i}(T)$ modules.
    Suppose that $p^a \equiv -1 \pmod{n}$ has a solution. We will show that $|M|/|M^{C_n}|$ is a square for all $\mathbb{Z}_p[T]/P_{d,i}(T)$-modules $M$, from which it follows that $|X|/|X^H|$ is a square.
    This condition implies $\mathrm{ord}_p(d)$ is even for all $d|n$, $d \neq 1$, and so the degree of $P_{d,i}$ is even. For a uniformiser $\pi$, $\mathbb{Z}_p[T]/P_{d,i}(T)$-modules are of the form $\mathbb{Z}_p[T]/(\pi^a, P_{d,i}(T))$ for some $a$, and have size $p^{a \mathrm{deg}(P_{d,i})}$ as the extension is unramified. These modules have square order as $\mathrm{deg}(P_{d,i})$ is even, and no fixed part.

    When $d=1$, we have $\mathbb{Z}_p$-modules with $C_n$ acting trivially, so $|M|/|M^{C_n}|=1$.
\end{proof}

\begin{remark} In the case where $p^a \equiv -1 \pmod{n}$ doesn't have solutions, we have some $\mathbb{Z}_p[H]$-modules $M_i$ with size an odd power of $p$, with no fixed part. By Theorem \ref{maintheorem} we can conclude that if each $M_i$ appears with multiplicity $2a_i$, then $\sum a_i$ is even.
\end{remark}

\end{document}